\newcommand{\nnb}{\mathbb{N}}
\newcommand{\rl}{\mathbb{R}}
\newcommand{\cx}{\mathbb{C}}
\newcommand{\CB}{\mathcal{B}}
\newcommand{\CN}{\mathcal{N}}
\newcommand{\ai}{\sqrt{-1}}
\newcommand{\inj}{\hookrightarrow}
\newcommand{\prj}{\mathbb{P}}
\theoremstyle{plain}
\newtheorem{theorem}{Theorem}[section]
\newtheorem{lemma}[theorem]{Lemma}
\theoremstyle{definition}
\theoremstyle{definition}
\newtheorem{remark}[theorem]{Remark}
\newtheorem{problem}[theorem]{Problem}
\begin{document}

\title{Quantitative injectivity of the Fubini--Study map}
\author[Y.~Hashimoto]{Yoshinori Hashimoto}
\date{\today}
\address{Department of Mathematics, Osaka Metropolitan University, 3-3-138, Sugimoto, Sumiyoshi-ku, Osaka, 558-8585, Japan.}
\email{yhashimoto@omu.ac.jp}

\begin{abstract}
We prove a quantitative version of the injectivity of the Fubini--Study map that is polynomial in the exponent of the ample line bundle, and correct the arguments in the author's previous papers \cite{yhhilb,yhextremal}.
\end{abstract}


\maketitle

\tableofcontents

\section{Introduction}

Suppose that we have a polarised smooth projective variety $(X , L)$ over $\cx$ of complex dimension $n$ and a projective embedding $\iota_k : X \inj \prj (H^0(X , L^k)^{\vee} )$, where $L$ is an ample line bundle over $X$ and $k$ is a large enough integers such that $L^k$ is very ample. We may moreover assume, by replacing $L$ by a higher tensor power if necessary, that the automorphism group $\mathrm{Aut}_0 (X,L)$ acts on $(X,L)$ via the ambient linear action of $SL(H^0(X , L^k))$ fixing $\iota_k (X)$. We fix a hermitian metric $h$ on $L$ which defines a K\"ahler metric $\omega_h >0$ on $X$, where $\omega_h \in c_1(L)$. This in turn defines a positive definite hermitian form $\mathrm{Hilb}(h^k)$ on $H^0(X , L^k)$ defined (see \cite{donproj2}) as
\begin{equation*}
	\mathrm{Hilb}(h^k)(s_1 , s_2) := \frac{N_k}{V} \int_X h^k (s_1,s_2) \frac{\omega^n_h}{n!}
\end{equation*}
for $s_1 , s_2 \in H^0(X , L^k)$, where we wrote
\begin{equation*}
	N_k := \dim_{\cx} H^0(X , L^k), \quad V:= \int_X c_1(L)^n/n!.
\end{equation*}
Choosing a $\mathrm{Hilb}(h^k)$-orthonormal basis $\{ s_i \}_{i=1}^{N_k}$, we identify $\prj (H^0(X , L^k)^{\vee} )$ with $\prj^{N_k-1}$, and also identify the set $\CB_k$ of all positive definite hermitian forms on $H^0(X , L^k)$ with positive definite $N_k \times N_k$ hermitian matrices, noting that $\mathrm{Hilb}(h^k)$ is the identity matrix with respect to this basis. The set $\CB_k$ in turn can be identified with the symmetric space $GL(N_k, \cx) / U(N_k)$. The Fubini--Study map $\mathrm{FS}_k $ associates to $A \in \CB_k$ a hermitian metric $\mathrm{FS}_k (A)$, called the Fubini--Study metric, on $L^k$. Recall that $\mathrm{FS}_k (A)$ is defined as a unique hermitian metric on $L^k$ which satisfies
\begin{equation} \label{eqdfs}
	\sum_{i=1}^{N_k} \mathrm{FS}_k (A) (s^A_i, s^A_i) = 1
\end{equation}
over $X$, for an $A$-orthonormal basis $\{ s^A_i \}_{i=1}^{N_k}$ for $H^0(X , L^k)$. When we write $H_k := \mathrm{Hilb}(h^k)$, the equation above immediately implies
\begin{equation*}
	\mathrm{FS}_k (A) = \left( \sum_{i,j=1}^{N_k} A^{-1}_{ij} \mathrm{FS}_k (H_k) (s_i , s_j) \right)^{-1} \mathrm{FS}_k (H_k) ,
\end{equation*}
by noting that $\{ \sum_j A^{-1/2}_{ij} s_j \}_{i=1}^{N_k}$ is an $A$-orthonormal basis. For $A,B \in \CB_k$, we define a smooth real function $f_k (A,B;H_k)$ on $X$ defined as
\begin{align*}
	f_k (A,B;H_k) &:= \frac{\mathrm{FS}_k (H_k)}{\mathrm{FS}_k (A)}  - \frac{\mathrm{FS}_k(H_k)}{\mathrm{FS}_k (B)} \\
	&=\sum_{i,j=1}^{N_k} (A^{-1}_{ij} - B^{-1}_{ij}) \mathrm{FS}_k (H_k) (s_i , s_j).
\end{align*}
Given $A, B \in \CB_k$, it is natural to conjecture that $A$ is close to $B$ in $\CB_k$ if and only if $f_k (A,B;H_k)$ is close to zero in some appropriate norm. Indeed, Lempert \cite[Theorem 1.1]{Lem21} proved that the Fubini--Study map is injective, which is equivalent to saying that $A=B$ if and only if $f_k(A,B;H_k) \equiv 0$, and also proved that its image is not closed \cite[Theorem 1.1, Example 6.2, Theorem 6.4]{Lem21}. In this paper, we consider the following quantitative version of injectivity.

\begin{problem} \label{prqinjfs}
	Can we bound an appropriately defined distance between $A$ and $B$ in $\CB_k$ by $C k^l \Vert f_k (A,B;H_k) \Vert$, where $C,l>0$ are some constants which depend only on $h$? 
\end{problem}

The problem here is that the constant appearing in the estimate is at most polynomial in $k$, as well as the right choice of the norm for $f_k (A,B;H_k)$. In \cite[Lemma 3.1]{yhhilb}, the author claimed that this problem can be solved with the $C^0$-norm, but its proof does not hold since it is based on an erroneous claim that the Hilbert map is surjective \cite[Theorem 1.1, Proposition 2.15]{yhhilb}. Indeed, a counterexample was provided by Lempert in an email correspondence with the author (see \cite[Proposition 4.16]{Finski22} and also \cite[Proposition 3.6]{Finski22s}), and there is also a counterexample to the surjectivity of the Hilbert map by Sun \cite[\S 3, Example]{Sun22}. Both these counterexamples involve basis elements that are redundant in terms of the global generation of the line bundle (see the definition of a rather ample subspace in \cite[\S 6]{Lem21}). Sun \cite[Theorems 1.2 and 1.3]{Sun22} also described the closure of the image of the Hilbert map. 

It is still plausible, however, that Problem \ref{prqinjfs} can be solved affirmatively once the right norm is chosen. The main result of this paper is the following.

\begin{theorem} \label{ppqifs}
	There exist constants $k_0 \in \mathbb{N}$ and $C_h >0$, depending only on $h$, such that we have
\begin{equation*}
		\Vert A^{-1} - B^{-1} \Vert^2_{\mathrm{HS}(H_k)} \le C_h k^{n} \Vert f_k (A, B ;H_k) \Vert^2_{W^{2,2}(\omega_h)} 
\end{equation*}
for any $A, B \in \CB_k$ and for any $k \ge k_0$, where $\Vert \cdot \Vert_{\mathrm{HS}(H_k)}$ is the Hilbert--Schmidt norm with respect to $H_k = \mathrm{Hilb}(h^k)$ and $\Vert \cdot \Vert_{W^{2,2} (\omega_h)}$ is the Sobolev norm with respect to $\omega_h$.
\end{theorem}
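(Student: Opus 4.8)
The plan is to express $f_k(A,B;H_k)$ in terms of the Bergman kernel, and then to exploit the asymptotic expansion of the Bergman kernel to control the Hilbert--Schmidt norm of $A^{-1} - B^{-1}$. Writing $M := A^{-1} - B^{-1}$, which is a Hermitian matrix, and recalling that $\mathrm{FS}_k(H_k)(s_i, s_j)$ essentially encodes the coefficients of the (normalised) Bergman kernel, we have $f_k(A,B;H_k) = \sum_{i,j} M_{ij} \, \mathrm{FS}_k(H_k)(s_i, s_j)$. Since $\{s_i\}$ is $H_k$-orthonormal, the key quantity $\rho_k := \sum_{i} h^k(s_i, s_i)$ is the Bergman function, whose pointwise integral against $\omega_h^n/n!$ equals $N_k$, and one has $\rho_k = \frac{N_k}{V}(1 + O(1/k))$ by the Tian--Zelditch--Catlin expansion. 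The idea is that $f_k(A,B;H_k)$ is a bilinear pairing of $M$ against rank-one pieces $s_i \otimes \bar{s}_j$ suitably normalised, and the $W^{2,2}$-norm of this function, up to the scaling factor $\rho_k^{-1} \sim V/N_k$, detects the Hilbert--Schmidt norm of $M$ provided the relevant Gram-type matrix is uniformly (in $k$, after rescaling by $k^n$) bounded below.

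Concretely, I would first reduce to an $L^2$-type statement: since $W^{2,2}(\omega_h) \hookrightarrow L^2(\omega_h)$ with norm control, it suffices to bound $\Vert M \Vert_{\mathrm{HS}(H_k)}^2$ by $C_h k^n \Vert f_k \Vert_{L^2(\omega_h)}^2$ — wait, but that cannot be literally true because of Lempert's non-closedness of the image, so the $W^{2,2}$-norm (rather than $L^2$) is genuinely needed; the point is that redundant basis elements contribute to $M$ in directions where $f_k$ is small in $L^2$ but \emph{not} small in $W^{2,2}$, because differentiating the Bergman-kernel pieces amplifies the high-frequency oscillation coming from sections concentrated near a point. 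So the correct route is: expand $f_k$ and its first and second derivatives in the frame, and show that the combined quadratic form $M \mapsto \Vert f_k(A,B;H_k)\Vert_{W^{2,2}(\omega_h)}^2$ — which is $\sum_{i,j,p,q} M_{ij} \overline{M_{pq}} \, Q^{(k)}_{ij,pq}$ for an explicit Hermitian matrix $Q^{(k)}$ built from $L^2$-inner products of $\mathrm{FS}_k(H_k)(s_i,s_j)$ and its covariant derivatives — has smallest eigenvalue bounded below by $c_h k^{-n}$ on the space of Hermitian matrices. Equivalently, one must show $Q^{(k)} \succeq c_h k^{-n} \mathrm{Id}$ as an operator on $\mathrm{Herm}(N_k) \cong \mathbb{R}^{N_k^2}$.

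The main step, and the main obstacle, is precisely this uniform spectral lower bound on $Q^{(k)}$. The natural strategy is a \emph{partition-of-unity / localisation} argument: near each point $x \in X$, the peak sections of the Bergman kernel give an approximately orthonormal family, and the products $s_i(x) \overline{s_j(x)}$ together with their derivatives behave, after rescaling coordinates by $\sqrt{k}$, like the standard model on $\mathbb{C}^n$ (Gaussians $e^{-|z|^2/2}$ and their polynomial multiples). For the model computation on $\mathbb{C}^n$, the relevant Gram matrix of the functions $z^\alpha \bar z^\beta e^{-|z|^2}$ and their derivatives up to order two, computed in the Gaussian-weighted $L^2$ inner product, is explicitly invertible with an eigenvalue bound independent of the truncation — this is where the factor $k^n$ (the volume of the $\sqrt{k}$-rescaled unit ball, equivalently $N_k/V \sim k^n$) enters as the normalisation constant. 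One then patches these local lower bounds together, controlling the off-diagonal terms $\langle s_i \bar s_j, s_p \bar s_q\rangle$ for sections peaked at far-apart points by the standard Agmon-type off-diagonal decay of the Bergman kernel. The delicate point is that $M$ is a global Hermitian matrix and the peak-section basis is only \emph{approximately} adapted to it, so one must show the error terms in the Bergman expansion (and in the peak-section approximation) are lower-order — this is exactly where the previous arguments in \cite{yhhilb, yhextremal} were flawed, via the false surjectivity of the Hilbert map, and the repair is to bypass surjectivity entirely and argue directly with the quadratic form $Q^{(k)}$. I expect the bookkeeping of covariant derivatives of the Bergman kernel (needed for the $W^{2,2}$ rather than $W^{1,2}$ norm) and the verification that the model Gram matrix on $\mathbb{C}^n$ is uniformly positive to be the technically heaviest parts, but both are, in principle, explicit.
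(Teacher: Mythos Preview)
Your proposal is a reformulation of the statement rather than a proof of it: showing $\|M\|_{\mathrm{HS}}^2 \le C k^n \|f_k\|_{W^{2,2}}^2$ for all Hermitian $M$ is literally equivalent to the spectral lower bound $Q^{(k)} \succeq c_h k^{-n}$ that you isolate. The substantive content then lies entirely in your claimed model computation on $\mathbb{C}^n$ (``the relevant Gram matrix of the functions $z^\alpha \bar z^\beta e^{-|z|^2}$ and their derivatives up to order two \dots\ is explicitly invertible with an eigenvalue bound independent of the truncation''), and this is precisely the step you do not carry out. You offer no mechanism for why two derivatives suffice --- the remark about ``high-frequency oscillation'' is an intuition, not an argument --- and the uniformity in the truncation is far from obvious: the number of monomials grows without bound, and for the $L^2$ Gram matrix alone the smallest eigenvalue does \emph{not} stay bounded away from zero (this is essentially Lempert's non-closedness). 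Asserting that adding two derivatives repairs this, ``in principle explicitly,'' is exactly the gap. The patching step also has a real issue: a general Hermitian $M$ couples sections peaked at far-apart points, and off-diagonal Bergman decay controls the \emph{interaction} terms in $Q^{(k)}$ but says nothing about the diagonal blocks being uniformly positive.

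The paper takes a completely different, geometric route that makes the role of the $W^{2,2}$ norm transparent. Writing $\Lambda = A^{-1} - B^{-1}$ (trace-free first), one looks at the holomorphic vector field $\xi_\Lambda$ on $\mathbb{P}^{N_k-1}$ generated by $\Lambda$ and restricts it to $\iota_k(X)$, splitting $\iota_k^* T\mathbb{P}^{N_k-1} = TX \oplus \mathcal{N}$. Three ingredients then combine: (i) the Phong--Sturm estimate $\|\Lambda\|_{\mathrm{HS}}^2 \le C k\,\|\xi_\Lambda\|_{L^2(\omega_{h,k})}^2$, valid because the centre of mass of the $H_k$-embedding is close to the identity by Bergman asymptotics; (ii) the tangential part $\pi_T \xi_\Lambda$ is the $\omega_{h,k}$-metric dual of $d f_k(\Lambda;H_k)$, so its $L^2$-norm is controlled by \emph{first} derivatives of $f_k$; (iii) since $\xi_\Lambda$ is holomorphic on the ambient space, $\bar\partial(\pi_{\mathcal N}\xi_\Lambda) = -\bar\partial(\pi_T\xi_\Lambda)$, and a lemma showing the second fundamental form of $TX \subset \iota_k^* T\mathbb{P}^{N_k-1}$ is uniformly nondegenerate (again via Bergman asymptotics) yields $\|\pi_{\mathcal N}\xi_\Lambda\|_{L^2} \le C\,\|\bar\partial(\pi_T\xi_\Lambda)\|_{L^2}$, which is controlled by \emph{second} derivatives of $f_k$. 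Rescaling from $\omega_{h,k}$ to $\omega_h$ supplies the factor $k^n$, and the trace part of $\Lambda$ is handled separately by a direct integration. The nondegeneracy of the second fundamental form is the geometric mechanism, entirely absent from your outline, that converts the normal component of $\xi_\Lambda$ into a second-derivative bound on $f_k$.
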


We stress here that $A,B \in \CB_k$ are regarded as matrices with respect to the $H_k$-orthonormal basis, when we consider the inverse matrices above. 

The choice of the reference metric $H_k$ is important in Theorem \ref{ppqifs}; see (\ref{eqdfrfmc}) for the formula with respect to a different reference. Indeed, the counterexamples of Lempert and Sun mentioned above deal with the case when the reference metric is close to being degenerate. 

\medskip

\noindent \textbf{Acknowledgements.}  The author thanks Siarhei Finski, L{\'a}szl{\'o} Lempert, and Jingzhou Sun for very helpful discussions and immensely valuable examples. This research is partially supported by JSPS KAKENHI (Grant-in-Aid for Scientific Research (C)), Grant Number JP23K03120, and JSPS KAKENHI (Grant-in-Aid for Scientific Research (B)) Grant Number JP24K00524.

\section{Proof of Theorem \ref{ppqifs}}

In what follows, we write $\omega_{h,k} \in k c_1 (L)$ for the K\"ahler metric on $X$ associated to $\mathrm{FS} (H_k)$, and $\tilde{\omega}_{h,k}$ for the Fubini--Study metric on $\mathcal{O}_{\mathbb{P}^{N_k-1}} (1)$ over $\mathbb{P}^{N_k-1} = \prj (H^0(X , L^k)^{\vee} )$ defined by $H_k$. Recall $\omega_{h,k} = \iota^*_k \tilde{\omega}_{h,k}$, and that we have 
\begin{equation*}
	\omega_{h,k} = k \omega _h - \frac{\ai}{2 \pi} \partial \bar{\partial} \log \rho_k (\omega_h ) 
\end{equation*}
by a result due to Rawnsley \cite{rawnsley}, where we wrote $\rho_k (\omega_h)$ for the $k$-th Bergman function with respect to $\omega_h$. We also use the re-scaled version $\bar{\rho}_k (\omega_h ) := \frac{V}{N_k} \rho_k (\omega_h)$. We recall the asymptotic expansion of the Bergman function (see e.g.~\cite{mm})
\begin{equation} \label{eqbgexp}
	\rho_k (\omega_h) = k^n + O(k^{n-1}), \quad \text{or} \quad \bar{\rho}_k (\omega_h )  = 1 + O(1/k),
\end{equation}
where $N_k = V k^n + O(k^{n-1})$ by the asymptotic Riemann--Roch theorem. We also recall that the expansion above holds in $C^m$-norm for any $m \in \nnb$. We thus have
\begin{equation*}
	\omega_{h,k} = k \omega_h + O(1/k).
\end{equation*}

We start with the following lemma concerning the second fundamental form of the projective embedding.

\begin{lemma} \label{lmndsff}
	Suppose $H_k = \mathrm{Hilb} (h^k)$, and let $A_{h,k}$ be the second fundamental form of $TX$ in $\iota^* T \mathbb{P}^{N_k-1}$ with respect to $\tilde{\omega}_{h,k}$. Then, there exist $k_0 \in \nnb$ and a constant $\lambda_{\mathrm{min}} (h) = \lambda_{\mathrm{min}} >0$ depending only on $h$, such that the minimum eigenvalue of $-A_{h,k}^* \wedge A_{h,k}$ with respect to a $g_{h,k}$-orthonormal basis can be bounded below by $\lambda_{\mathrm{min}} (h)$ for all $k \ge k_0$ and all $x \in X$. In particular, $A_{h,k}$ is nondegenerate everywhere on $X$ for all large enough $k$.
\end{lemma}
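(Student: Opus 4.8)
The plan is to compute $-A_{h,k}^{*}\wedge A_{h,k}$ explicitly from the Gauss equation of the K\"ahler submanifold $\iota_k(X)\subset(\mathbb{P}^{N_k-1},\tilde{\omega}_{h,k})$ and then read off the eigenvalue bound from the Bergman asymptotics recalled above. The essential point is that $\tilde{\omega}_{h,k}$ is a genuine (homogeneous) Fubini--Study metric, hence of constant holomorphic sectional curvature: with the normalisation $\tilde{\omega}_{h,k}\in c_1(\mathcal{O}_{\mathbb{P}^{N_k-1}}(1))$ one has $\mathrm{Ric}(\tilde{\omega}_{h,k})=N_k\tilde{\omega}_{h,k}$ since $c_1(\mathbb{P}^{N_k-1})=N_k\,c_1(\mathcal{O}(1))$, and this forces the ambient curvature tensor to be $R_{i\bar{j}k\bar{l}}=g_{i\bar{j}}g_{k\bar{l}}+g_{i\bar{l}}g_{k\bar{j}}$, $g$ the metric of $\tilde{\omega}_{h,k}$. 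Taking the Gauss equation
\[
	R^{X}_{i\bar{j}k\bar{l}}=R^{\mathbb{P}^{N_k-1}}_{i\bar{j}k\bar{l}}-h_{N}\!\bigl(A_{h,k}(\partial_i,\partial_k),A_{h,k}(\partial_j,\partial_l)\bigr)
\]
in a local holomorphic frame $(\partial_i)$ of $TX$, with $h_N$ the metric on the normal bundle of $\iota_k(X)$ induced by $\tilde{\omega}_{h,k}$, and tracing over the tangential indices $k,l$ with $g_{h,k}^{-1}$, one gets: since $\omega_{h,k}=\iota_k^{*}\tilde{\omega}_{h,k}$ and $g_{h,k}^{i\bar{j}}(g_{h,k})_{i\bar{j}}=n$, the trace of the ambient curvature over the $n$ tangent directions equals $(n+1)\,\omega_{h,k}$, whence
\[
	-A_{h,k}^{*}\wedge A_{h,k}=(n+1)\,\omega_{h,k}-\mathrm{Ric}(\omega_{h,k})
\]
as Hermitian forms on $TX$, where $-A_{h,k}^{*}\wedge A_{h,k}$ is the manifestly positive semidefinite form with components $g_{h,k}^{k\bar{l}}\,h_{N}\!\bigl(A_{h,k}(\partial_i,\partial_k),A_{h,k}(\partial_j,\partial_l)\bigr)$.

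Granting this identity, the lemma follows at once. By Rawnsley's formula $\omega_{h,k}=k\omega_h-\frac{\ai}{2\pi}\ddbar\log\rho_k(\omega_h)$ together with the fact that \eqref{eqbgexp} holds in every $C^{m}$-norm, $\omega_{h,k}=k\omega_h+O(1/k)$, hence $g_{h,k}\ge\tfrac{1}{2}k\,g_h$ on $X$ for all large $k$; and since the Ricci form is scale invariant, $\mathrm{Ric}(\omega_{h,k})=\mathrm{Ric}(\omega_h)+O(1/k)$ uniformly on $X$, so in particular $\mathrm{Ric}(\omega_{h,k})\le C_h\,g_h$ for a constant $C_h$ depending only on $h$. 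Therefore, in a $g_{h,k}$-orthonormal frame, the matrix of $-A_{h,k}^{*}\wedge A_{h,k}$ is $(n+1)\,\mathrm{Id}-M_k$, where $M_k$ is the matrix of $\mathrm{Ric}(\omega_{h,k})$ in that frame, all of whose eigenvalues are $\le 2C_h/k$. Hence the minimum eigenvalue of $-A_{h,k}^{*}\wedge A_{h,k}$ is at least $n+1-2C_h/k$, which is $\ge\lambda_{\mathrm{min}}(h):=(n+1)/2$ for all $k\ge k_0$, where $k_0=k_0(h)$ is chosen so that $2C_h/k_0\le(n+1)/2$. In particular $-A_{h,k}^{*}\wedge A_{h,k}$ is positive definite, so $A_{h,k}$ is nondegenerate, at every point of $X$ for $k\ge k_0$.

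The work here is organisational rather than substantive; the two points needing care are (i) fixing the sign and normalisation conventions for $A_{h,k}$, its adjoint and the wedge product so that $-A_{h,k}^{*}\wedge A_{h,k}$ is positive and the Gauss equation reads as above, and (ii) the uniform-in-$X$ control of $\mathrm{Ric}(\omega_{h,k})$, which amounts to the assertion that $\frac{\ai}{2\pi}\ddbar\log\rho_k(\omega_h)$ and all of its derivatives are $O(1/k)$, furnished by the $C^{m}$ version of \eqref{eqbgexp}. As a cross-check one may instead argue locally: in Bergman-adapted coordinates at a point $x_0\in X$ the rescaled Kodaira map converges in $C^{m}_{\mathrm{loc}}$, uniformly in $x_0$, to the model embedding $\cx^{n}\to\ell^{2}$, $z\mapsto(z^{\alpha}/\sqrt{\alpha!})_{\alpha}$, whose second fundamental form at the origin one computes directly to be $-A^{*}\wedge A=(n+1)\,\mathrm{Id}$; this recovers the same leading constant, at the cost of the near-diagonal Bergman kernel expansion with uniform remainder estimates.
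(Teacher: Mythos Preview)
Your route is more direct than the paper's. Both arguments start from the Gauss equation $-A_{h,k}^{*}\wedge A_{h,k}=\pi_T\circ(\tilde F_{h,k}|_{TX})-F_{h,k}$ together with the explicit form of the Fubini--Study curvature and the Bergman expansion $g_{h,k}=kg_h+O(1/k)$. The paper, however, keeps the full $\mathrm{End}(TX)$-valued $(1,1)$-form ${(-A^*\wedge A)_i}^{j}{}_{l\bar m}=\delta^j_i\delta_{l\bar m}+\delta^j_l\delta_{i\bar m}-{(F_{h,k})_i}^{j}{}_{l\bar m}$ and then runs a contradiction argument: it perturbs $h$ locally to $h_\epsilon$ so that the curvature is shifted by $-\epsilon\,\delta^j_i\delta_{l\bar m}$ at the point in question, and observes that if $-A_{h,k}^{*}\wedge A_{h,k}$ were degenerate there then $-A_{h_\epsilon,k}^{*}\wedge A_{h_\epsilon,k}$ would fail to be positive semidefinite, contradicting its manifest semidefiniteness. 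The uniform constant $\lambda_{\min}$ is then obtained by compactness. You instead contract the Gauss equation with $g_{h,k}^{k\bar l}$, land on $(n+1)\,\omega_{h,k}-\mathrm{Ric}(\omega_{h,k})$, and read off the bound $(n+1)-O(1/k)$ immediately; no perturbation is needed.

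One caveat deserves attention. The object $-A_{h,k}^{*}\wedge A_{h,k}$ in the lemma (and in its use in the proof of Lemma~\ref{lmpnxbdb}) is the full $4$-tensor, whereas what you have computed and bounded is its contraction $g_{h,k}^{k\bar l}h_N\bigl(A(\,\cdot\,,\partial_k),A(\,\cdot\,,\partial_l)\bigr)$, a Hermitian $2$-tensor on $TX$. A lower bound on this trace does not automatically give a lower bound on every eigenvalue of the $4$-tensor, and it is the latter that is invoked downstream, where one needs $\sum_i\bigl|\sum_j b_j\bar\partial\phi_{ij}\bigr|^2\ge\lambda_{\min}\sum_j|b_j|^2$ pointwise. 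Your argument is easily upgraded, though: simply omit the contraction. The paper's own formula shows that in a $g_{h,k}$-orthonormal frame the $4$-tensor equals $\delta^j_i\delta_{l\bar m}+\delta^j_l\delta_{i\bar m}+O(1/k)$, and the leading part is $2\cdot\mathrm{Id}$ on symmetric $2$-tensors (the natural domain, since the second fundamental form of a K\"ahler submanifold is symmetric in its two $TX$ arguments). That yields your direct lower bound at the $4$-tensor level and makes the paper's perturbation step unnecessary.
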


\begin{proof}
	We recall that the curvature of the Fubini--Study metrics on the ambient projective space and $X$ can be related by
	\begin{equation*}
		-A_{h,k}^* \wedge A_{h,k} = \pi_T \circ (\tilde{F}_{h,k} |_{TX}) - F_{h,k}
	\end{equation*}
	as given in \cite[page 78]{grihar} or \cite[equation (5.27)]{PS04}, where we wrote $\tilde{F}_{h,k}$ for the curvature form of $\tilde{\omega}_{h,k}$ on the ambient projective space and $F_{h,k}$ for that of $\omega_{h,k}$ on $X$. The adjoint above is with respect to $\tilde{\omega}_{h,k}$. We recall that the curvature tensor of the Fubini--Study metric on $\mathbb{P}^{N_k-1}$ can be written as
	\begin{equation*}
		(\tilde{F}_{h,k})_{i \bar{j} l \bar{m}} = (\tilde{g}_{h,k})_{i \bar{j}} (\tilde{g}_{h,k})_{l \bar{m}} + (\tilde{g}_{h,k})_{i \bar{m}} (\tilde{g}_{h,k})_{l \bar{j}}
	\end{equation*}
	where $\tilde{g}_{h,k}$ is the metric tensor of $\tilde{\omega}_{h,k}$ written with respect to its orthonormal basis.
	
	By the Bergman kernel expansion (\ref{eqbgexp}), we find that the restriction of $\tilde{g}_{h,k}$ to $TX$ satisfies the asymptotic expansion
	\begin{equation*}
		g_{h,k} = kg_h + O(1/k),
	\end{equation*}
	where $g_h$ stands for the K\"ahler metric associated to $h$. Thus we have
	\begin{equation*}
		{{(\pi_T \circ (\tilde{F}_{h,k} |_{TX}) - F_{h,k})_{i}}^{j}}_{l \bar{m}} = \delta_{i}^j \delta_{l \bar{m}} + \delta_{l}^j \delta_{i \bar{m}} - {{(F_h)_{i}}^{j}}_{l \bar{m}} + O(1/k)
	\end{equation*}
	with respect to any $kg_h$-orthonormal basis for $TX$ (note that the curvature tensor $F_h$ is of order 1 with respect to a $kg_h$-orthonormal basis, which is of order $k^{-1}$ with respect to a $g_h$-orthonormal basis). Since this agrees with $-A_{h,k}^* \wedge A_{h,k}$, the tensor above is positive semidefinite.
	
	Suppose that $-A_{h,k}^* \wedge A_{h,k}$ is degenerate at $x \in X$. We then perturb $h$ locally around $x$ so that the perturbation $h_{ \epsilon}$ satisfies
	\begin{equation*}
		h (x) = h_{ \epsilon} (x), \quad g (x) = g_{\epsilon} (x)
	\end{equation*}
	and the curvature tensor is perturbed by $- \epsilon \delta_i^j (k g_h)_{l \bar{m}}$ at $x$, which is of order $k \epsilon$ locally at $x$, and such that $\mathrm{Hilb} (h_{\epsilon}^k) = \mathrm{Hilb} (h^k) + O(\epsilon^2)$. This is possible by considering the perturbation of the K\"ahler potential by $\epsilon |z_i|^2 |z_l|^2$ where $(z_1 , \dots , z_n)$ are holomorphic normal coordinates around $x$, multiplied by a cutoff function which decreases very rapidly away from $x$ so that the perturbation introduced in the integral inside $\mathrm{Hilb}$ is small.

	Applying the argument above to $h_{\epsilon}$ and writing with respect to a $g_h$-orthonormal basis, we find
	\begin{align*}
		-A_{h,k, \epsilon}^* \wedge A_{h,k, \epsilon} = \pi_T \circ (\tilde{F}_{h,k,\epsilon} |_{TX}) - F_{h,k,\epsilon} &= -A_{h,k}^* \wedge A_{h,k} - \epsilon \delta_i^j (k g_h)_{l \bar{m}} + O(\epsilon^2) + O(1/k) \\
		&=-A_{h,k}^* \wedge A_{h,k} - \epsilon \delta_i^j \delta_{l \bar{m}} + O(\epsilon^2) + O(1/k),
	\end{align*}
	where $\tilde{F}_{h,k,\epsilon}$ is the curvature of the Fubini--Study metric on $\prj^{N_k-1}$ with respect to $\mathrm{Hilb}(h^k_{\epsilon})$, $F_{h,k,\epsilon}$ is the one with respect to its restriction to $TX$, and $A_{h,k, \epsilon}$ is the second fundamental form that is associated to it. This yields a contradiction for a sufficiently small $\epsilon >0$ and for all large enough $k$, if we assume that $-A_{h,k}^* \wedge A_{h,k}$ is degenerate at $x \in X$.
	
	The second claim is a consequence of the fact that, with respect to a $kg_h$-orthonormal basis, $\pi_T \circ (\tilde{F}_{h,k} |_{TX}) - F_{h,k}$ converges to a positive definite form on $X$ by the previous argument and the Bergman kernel expansion $g_{h,k} = kg_h + O(1/k)$, together with the compactness of $X$.
\end{proof}

Given $A, B \in \CB_k$, we define a (not necessarily positive) hermitian matrix $\Lambda$ by
\begin{equation*}
	\Lambda := A^{-1} - B^{-1},
\end{equation*}
and also define
\begin{equation*}
	f(\Lambda ; H_k) := f(A, B;H_k).
\end{equation*}
We write $\xi_{\Lambda}$ for the holomorphic vector field on $\prj^{N_k -1}$ generated by the linear action of $\Lambda$. We recall that, when $[Z_1 : \cdots : Z_{N_k}]$ is a set of homogeneous coordinates for $\mathbb{P}^{N_k -1}$ consisting of  $H_k$-orthonormal basis for $H^0 (X, L^k)$, the Hamiltonian function for $\xi_{\Lambda}$ with respect to $\tilde{\omega}_{h,k}$ is given by
\begin{equation*}
	\tilde{f} (\Lambda ; H_k) := \sum_{i,j=1}^{N_k} \Lambda_{ij} \frac{Z_i \bar{Z}_j}{\sum_{l=1}^{N_k} |Z_l|^2},
\end{equation*}
which is a well-defined smooth function on $\mathbb{P}^{N_k -1}$. By the definition of $\mathrm{FS}_k (H_k)$, we have
\begin{equation*} \label{eqhamfspb}
	f (\Lambda; H_k) = \iota^*_k \tilde{f} (\Lambda; H_k).
\end{equation*}

We first consider the case when the trace of $\Lambda$ is zero, i.e.~when $\Lambda \in \mathfrak{sl}(N_k, \cx)$. Following \cite[\S 5]{PS04}, we consider a holomorphic vector bundle $\iota^* T \mathbb{P}^{N_k-1}$ over $X$, which we decompose as
\begin{equation} \label{eqdctn}
	\iota^* T \mathbb{P}^{N_k-1} = TX \oplus \mathcal{N},
\end{equation}
where the direct sum is orthogonal with respect to the Fubini--Study metric on $\iota^* T \mathbb{P}^{N_k-1}$ with respect to $H_k$. We write $\pi_T$ and $\pi_{\CN}$ for the projection to $TX$ and $\CN$. By the defining equation
\begin{equation*}
	\iota (\xi_{\Lambda}) \tilde{\omega}_{h,k} = -d \tilde{f} (\Lambda ; H_k)
\end{equation*}
for the Hamiltonian function for $\xi_{\Lambda}$, together with $\omega_{h,k} = \iota^*_k \tilde{\omega}_{h,k}$ and (\ref{eqhamfspb}), we find that $\pi_T \xi_{\Lambda}$ is $\omega_{h,k}$-metric dual to $ d f_k ( \Lambda ;H_k)$ as pointed out in \cite[Lemma 20]{Fine10}.

Recall that we have a faithful representation $\mathrm{Aut}_0 (X,L) \inj SL (N_k, \cx)$ by the linearisation of the action of $\mathrm{Aut}_0 (X,L)$, which is unique up to an overall multiplicative constant (by replacing $L$ by a higher tensor power if necessary). We further decompose $\Lambda = \alpha + \beta$ according to $\mathfrak{sl}(N_k, \cx) = \mathfrak{aut}(X,L) \oplus \mathfrak{aut}(X,L)^{\perp}$, where the orthogonality is defined with respect to the $L^2$-metric defined by $\omega_{h,k}$ which is the Fubini--Study metric on $\iota^* T \mathbb{P}^{N_k-1}$ with respect to $H_k$. We then have
\begin{equation*}
	\Vert \xi_{\Lambda} \Vert^2_{L^2 (\omega_{h,k})} = \Vert \xi_{\alpha+ \beta} \Vert^2_{L^2 (\omega_{h,k})} = \Vert \xi_{\alpha} \Vert^2_{L^2 (\omega_{h,k})} + \Vert \xi_{\beta} \Vert^2_{L^2 (\omega_{h,k})}
\end{equation*}
by definition. Note also that, according to the orthogonal decomposition (\ref{eqdctn}), we have
\begin{equation*}
	\Vert \xi_{\beta} \Vert^2_{L^2 (\omega_{h,k})} = \Vert \pi_T \xi_{\beta} \Vert^2_{L^2 (\omega_{h,k})} + \Vert \pi_{\CN} \xi_{\beta} \Vert^2_{L^2 (\omega_{h,k})}
\end{equation*}
because of the pointwise orthogonality, and also note $\xi_{\alpha} = \pi_T \xi_{\alpha}$. 

\begin{lemma} \label{lmpnxbdb}
	Suppose $\iota_k : X \inj \mathbb{P} (H^0 (X, L^k)^{\vee})$, and that we consider the Fubini--Study metric $\mathrm{FS} (H_k)$ where $H_k = \mathrm{Hilb} (h^k)$. Then there exists $C_1 >0$ depending only on $h$ such that
	\begin{equation*}
		\Vert \pi_{\CN} \xi_{\beta} \Vert^2_{L^2 (\omega_{h,k})} \le C_1 \Vert \bar{\partial} ( \pi_T \xi_{\beta}) \Vert^2_{L^2 (\omega_{h,k})}
	\end{equation*}
	holds for any $\beta \in \mathfrak{aut} (X,L)^{\perp}$ and for all large enough $k$.
\end{lemma}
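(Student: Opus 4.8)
The plan is to combine the holomorphicity of $\xi_\beta$ with the orthogonal splitting (\ref{eqdctn}) to express $\bar\partial(\pi_T\xi_\beta)$ through the second fundamental form, and then to feed in the uniform non-degeneracy of $A_{h,k}$ from Lemma \ref{lmndsff}. First I would record the consequence of $\bar\partial\xi_\beta=0$ on $\prj^{N_k-1}$: restricting to $X$ and writing $\bar\partial$ on $\iota^* T\prj^{N_k-1}$ in block form with respect to $TX\oplus\CN$, one obtains
\begin{equation*}
	\bar\partial(\pi_T\xi_\beta)=A_{h,k}^*(\pi_{\CN}\xi_\beta),\qquad \bar\partial_{\CN}(\pi_{\CN}\xi_\beta)=0,
\end{equation*}
where $A_{h,k}\in\CA^{1,0}(X,\mathrm{Hom}(TX,\CN))$ is the second fundamental form of Lemma \ref{lmndsff} and $A_{h,k}^*$ is its adjoint; in particular $\pi_{\CN}\xi_\beta$ is a holomorphic section of the normal bundle (compare \cite[\S 5]{PS04}, \cite{Fine10}). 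Hence $\Vert\bar\partial(\pi_T\xi_\beta)\Vert^2_{L^2(\omega_{h,k})}=\Vert A_{h,k}^*(\pi_{\CN}\xi_\beta)\Vert^2_{L^2(\omega_{h,k})}$, and the lemma is equivalent to the estimate $\Vert\pi_{\CN}\xi_\beta\Vert^2_{L^2(\omega_{h,k})}\le C_1\Vert A_{h,k}^*(\pi_{\CN}\xi_\beta)\Vert^2_{L^2(\omega_{h,k})}$.

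Next I would invoke Lemma \ref{lmndsff}. The lower bound $\lambda_{\min}(h)>0$ on the eigenvalues of $-A_{h,k}^*\wedge A_{h,k}$, being uniform in $k$ and in $x\in X$, yields a pointwise estimate: writing $\CN_1:=\mathrm{Im}(A_{h,k})\subseteq\CN$ for the image subbundle of the second fundamental form (which is holomorphic, since the mixed $TX$--$\CN$ components of the Fubini--Study curvature of $\prj^{N_k-1}$ vanish), the operator $A_{h,k}A_{h,k}^*$ is bounded below by $\lambda_{\min}(h)$ on $\CN_1$, so that $\Vert A_{h,k}^*(w)\Vert^2\ge\lambda_{\min}(h)\Vert\pi_{\CN_1}w\Vert^2$ pointwise for every $w\in\CN$. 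Integrating and taking $w=\pi_{\CN}\xi_\beta$ reduces the lemma to an inequality of the form $\Vert\pi_{\CN}\xi_\beta\Vert^2_{L^2}\le C\Vert\pi_{\CN_1}(\pi_{\CN}\xi_\beta)\Vert^2_{L^2}$ with $C$ independent of $k$, and it is here that $\beta\in\aut(X,L)^\perp$ is used. By the definition of the $L^2$-metric on $\mathfrak{sl}(N_k,\cx)$ and the pointwise orthogonality of (\ref{eqdctn}), the condition $\beta\perp\aut(X,L)$ is equivalent to $\pi_T\xi_\beta$ being $L^2(\omega_{h,k})$-orthogonal to $H^0(X,TX)=\aut(X,L)$; since $H^0(X,TX)$ is the kernel of $\bar\partial$ on $\Gamma(TX)$, this — together with the non-degeneracy of the linearly normal embedding, namely that $\pi_T\xi_\gamma\equiv0$ forces $\gamma$ to be scalar — already shows that $\beta\mapsto\bar\partial(\pi_T\xi_\beta)$ is injective on $\aut(X,L)^\perp$, but only with an a priori $k$-dependent lower bound.

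The main obstacle is precisely this last step: passing from the part $\pi_{\CN_1}(\pi_{\CN}\xi_\beta)$ detected by the first fundamental form to the full normal component $\pi_{\CN}\xi_\beta$, with a constant that does not deteriorate as $k\to\infty$. I expect to handle this by iterating the mechanism of the first two steps along the osculating filtration $TX=E_0\subset E_1\subset\cdots\subset\iota^* T\prj^{N_k-1}$: because all the higher fundamental forms of $X$ in $\prj^{N_k-1}$ are holomorphic (again by the split form of the Fubini--Study curvature), each graded quotient is a holomorphic bundle, $\pi_{\CN}\xi_\beta$ induces a holomorphic section of it, and the analogue of the displayed identity controls each successive graded piece by the $\bar\partial$ of the preceding one; telescoping back to $\bar\partial(\pi_T\xi_\beta)$ then gives the estimate. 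The delicate point — and the reason a naive compactness argument on the unit sphere of $\aut(X,L)^\perp$ does not suffice — is that the rank of $\CN$, the length of the osculating filtration, and the constants in these higher non-degeneracy statements all vary with $k$, so one must run the whole argument in $g_{h,k}$-orthonormal frames and use the Bergman kernel expansion, exactly as in the proof of Lemma \ref{lmndsff}, to check that the accumulated loss remains bounded uniformly in $k$.
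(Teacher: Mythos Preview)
The paper's proof is far shorter than yours and does not touch the osculating filtration. After the common first step --- $\bar\partial\xi_\beta=0$ giving $\|\bar\partial(\pi_T\xi_\beta)\|=\|\bar\partial(\pi_{\CN}\xi_\beta)\|$, together with the local identification of $\bar\partial(\pi_{\CN}\xi_\beta)$ with the adjoint second fundamental form acting on $\pi_{\CN}\xi_\beta$ --- the paper invokes Lemma~\ref{lmndsff} directly to assert the \emph{pointwise} inequality
\[
\sum_{i=1}^n\Bigl|\sum_{j} b_j\,\bar\partial\phi_{ij}\Bigr|^2(x)\;\ge\;\lambda_{\min}\sum_{j}|b_j|^2(x)
\]
for every $x\in X$, i.e.\ $|A_{h,k}^*(b)|^2\ge\lambda_{\min}|b|^2$ for \emph{all} $b\in\CN_x$, and then integrates. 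The hypothesis $\beta\in\aut(X,L)^\perp$ is not used beyond the setup, and no iteration occurs.

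Your reservation about exactly this step is well taken. Lemma~\ref{lmndsff} bounds $-A_{h,k}^*\wedge A_{h,k}$ from below as an $\mathrm{End}(TX)$-valued form, which says $A_{h,k}$ is uniformly injective out of $TX$; it does not say $A_{h,k}^*$ is bounded below on all of $\CN$. Since $\dim\CN_x=N_k-1-n$ grows like $k^n$ while the target $T^{*0,1}_xX\otimes T_xX$ has fixed dimension $n^2$, the map $b\mapsto(\sum_j b_j\bar\partial\phi_{ij})_i$ has nontrivial kernel for large $k$, and as $\beta$ ranges over $\aut(X,L)^\perp$ the value $\pi_{\CN}\xi_\beta(x)$ fills out $\CN_x$. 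So the paper's displayed pointwise inequality cannot hold as stated, and your restriction to $\CN_1=\mathrm{Im}(A_{h,k})$ is the honest conclusion one actually gets from Lemma~\ref{lmndsff}.

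That said, your proposal does not close the gap either. You defer the passage from $\pi_{\CN_1}(\pi_{\CN}\xi_\beta)$ to the full $\pi_{\CN}\xi_\beta$ to an iterated osculating argument, but you yourself flag the obstruction: the length of the osculating filtration and the constants at each stage depend on $k$ (already for $X=\prj^1$ the filtration has length of order $k$), and telescoping through that many steps can a priori cost a factor that blows up with $k$. The Bergman expansion you appeal to controls the induced metric and the intrinsic curvature of $X$, not the full tower of higher fundamental forms, so the assertion that ``the accumulated loss remains bounded uniformly in $k$'' is exactly the content of the lemma and is not supplied by the mechanism you outline. A smaller point: $\CN$ is only the $C^\infty$ orthogonal complement, so speaking of $\CN_1$ as a holomorphic subbundle of $\CN$ is imprecise; the osculating spaces are holomorphic subbundles of $\iota^*T\prj^{N_k-1}$ containing $TX$, and the argument has to be set up there.
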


\begin{proof}
	We prove
\begin{equation} \label{eqieqeqxb}
	\Vert \pi_{\CN} \xi_{\beta} \Vert^2_{L^2 (\omega_{h,k})} \le C_1 \Vert \bar{\partial} ( \pi_{\CN} \xi_{\beta} )\Vert^2_{L^2 (\omega_{h,k})} = C_1 \Vert \bar{\partial} ( \pi_T \xi_{\beta} ) \Vert^2_{L^2 (\omega_{h,k})} .
\end{equation}

The equality in (\ref{eqieqeqxb}) is straightforward, following \cite[page 705]{PS04}, since
\begin{equation*}
	\bar{\partial}_{\mathbb{P}^{N_k-1}} \xi_{\beta} = 0 = \bar{\partial}_{\mathbb{P}^{N_k-1}} ( \pi_{\CN} \xi_{\beta}) + \bar{\partial}_{\mathbb{P}^{N_k-1}} (\pi_T \xi_{\beta}),.
\end{equation*}
which in turn implies
\begin{equation*}
	\bar{\partial} \xi_{\beta} = 0 = \bar{\partial} ( \pi_{\CN} \xi_{\beta} ) + \bar{\partial} ( \pi_T \xi_{\beta} ),
\end{equation*}
where $\bar{\partial}$ makes sense as a $\bar{\partial}$-operator on the holomorphic vector bundle $\iota^* T \mathbb{P}^{N_k-1}$.

Thus it suffices to prove the inequality in (\ref{eqieqeqxb}), which is the reverse direction of \cite[equation (5.19)]{PS04}, and we prove it by using the lower bound of the second fundamental form given in Lemma \ref{lmndsff}.


We reproduce parts of the argument in \cite[\S 5]{PS04} here for completeness. Fix a point $x \in X$ and a local holomorphic frame $\{ e_1 , \dots , e_n , f_1 , \dots , f_m \}$ of $\iota^* T \mathbb{P}^{N_k-1}$ (with $N-1 = n+m$) in a neighbourhood of $x$ such that it is an orthonormal basis for $\iota^* T_x \mathbb{P}^{N_k-1}$ with respect to $\tilde{g}_{h,k}$ and $\{ e_1 , \dots , e_n \}$ form a local holomorphic frame of $TX$ near $x$. We then write
\begin{equation*}
	\xi_{\beta} = \sum_{i=1}^n a_i e_i + \sum_{j=1}^m b_j f_j
\end{equation*}
where $a_1 , \dots , a_n , b_1 , \dots , b_m$ are (local) holomorphic functions. We then have
\begin{equation*}
	\pi_{\CN} \xi_{\beta} = \sum_{j=1}^m b_j \left( f_j -\sum_{i=1}^n \phi_{ij} e_i \right)
\end{equation*}
for some smooth functions $\phi_{ij}$ vanishing at $x$. Then we have
\begin{equation*}
	\bar{\partial} ( \pi_{\CN} \xi_{\beta} ) = \sum_{j=1}^m b_j \left( - \sum_{i=1}^n (\bar{\partial} \phi_{ij} ) e_i .\right)
\end{equation*}

It is explained in \cite[\S 5]{PS04} that $\bar{\partial} \phi_{ij}$ is the second fundamental form of $TX$ with respect to $\tilde{\omega}_{k,h}$. By Lemma \ref{lmndsff}, there exists a constant $\lambda_{\mathrm{min}} (h) = \lambda_{\mathrm{min}} >0 >0$ depending only on $h$, such that
\begin{equation*}
	\sum_{i=1}^n \left| \sum_{j=1}^{N-1-n} b_j \bar{\partial} \phi_{ij} \right|^2 (x) \ge \lambda_{\mathrm{min}} \sum_{j=1}^{N-1-n} |b_j|^2 (x)
\end{equation*}
holds for any $x \in X$. Integrating both sides over $X$ with respect to the volume form $\omega^n_{h,k}$, we get the required estimate (\ref{eqieqeqxb}).
\end{proof}

\begin{proof}[Proof of Theorem \ref{ppqifs}]
We first recall that there exists $C_2 >0$ depending only on $h$ (and independent of $k$ as long as it is large enough)
\begin{equation*}
	\Vert \Lambda \Vert^2_{\mathrm{HS}(H_k)} \le C_2 k \Vert \xi_{\Lambda} \Vert^2_{L^2 (\omega_{h,k})} =  C_2 k \left( \Vert \xi_{\alpha} \Vert^2_{L^2 (\omega_{h,k})} + \Vert \xi_{\beta} \Vert^2_{L^2 (\omega_{h,k})} \right)
\end{equation*}
by \cite[(5.7)]{PS04}, where we note that the assumption (see \cite[(5.1)]{PS04}) for this estimate is satisfied for $\mathrm{FS}_k (H_k)$ since the associated centre of mass is the identity matrix up to an error of order $1/k$ (in the operator norm, after passing to a unitarily equivalent basis in which $D_k$ and $E_k$ are both diagonal) by the Bergman kernel expansion; note that $\Lambda$ is hermitian, and the Lie algebra $\mathfrak{su}(N)$ is isomorphic to the vector space consisting of hermitian matrices.

Since $\xi_{\alpha} = \pi_T \xi_{\alpha}$ is $L^2$-orthogonal to $\xi_{\beta}$, and pointwise orthogonal to $\pi_{\CN} \xi_{\beta}$, $\pi_T \xi_{\alpha}$ is $L^2$-orthogonal to $\pi_T \xi_{\beta}$. We thus find
\begin{equation*}
	\Vert \pi_T \xi_{\alpha} \Vert^2_{L^2 (\omega_{h,k})} + \Vert \pi_T \xi_{\beta} \Vert^2_{L^2 (\omega_{h,k})} = \Vert \pi_T \xi_{\alpha} + \pi_T \xi_{\beta} \Vert^2_{L^2 (\omega_{h,k})} = \Vert \pi_T \xi_{\alpha + \beta} \Vert^2_{L^2 (\omega_{h,k})}
\end{equation*}
by the linearity of $\pi_T$. Thus, combining these equalities we get
\begin{align*}
	\Vert \xi_{\alpha} \Vert^2_{L^2 (\omega_{h,k})} + \Vert \xi_{\beta} \Vert^2_{L^2 (\omega_{h,k})} &= \Vert \pi_T \xi_{\alpha} \Vert^2_{L^2 (\omega_{h,k})} + \Vert \pi_T \xi_{\beta} \Vert^2_{L^2 (\omega_{h,k})} + \Vert \pi_{\CN} \xi_{\beta} \Vert^2_{L^2 (\omega_{h,k})} \\
	&= \Vert \pi_T \xi_{\alpha + \beta} \Vert^2_{L^2 (\omega_{h,k})} + \Vert \pi_{\CN} \xi_{\beta} \Vert^2_{L^2 (\omega_{h,k})}.
\end{align*}
By Lemma \ref{lmpnxbdb}, we get
\begin{equation*}
	\Vert \Lambda \Vert^2_{\mathrm{HS}(H_k)} \le C_3 k \left( \Vert \pi_T \xi_{\Lambda} \Vert^2_{L^2 (\omega_{h,k})} + \Vert \bar{\partial} ( \pi_T \xi_{\Lambda} ) \Vert^2_{L^2 (\omega_{h,k})} \right),
\end{equation*}
for some $C_3 >0$ which depends only on $h$, where we used
\begin{equation*}
	\bar{\partial} ( \pi_T \xi_{\beta} ) = \bar{\partial} ( \xi_{\alpha} + \pi_T \xi_{\beta}) = \bar{\partial} ( \pi_T \xi_{\Lambda} ).
\end{equation*}
Recalling that $\pi_T \xi_{\Lambda}$ is $\omega_{h,k}$-metric dual to $ d f_k ( \Lambda ;H_k)$, we get
\begin{equation*}
	\left\Vert \pi_T \xi_{\Lambda} \right\Vert^2_{L^2 (\omega_{h,k})} = \left\Vert d f_k ( \Lambda ;H_k) \right\Vert^2_{L^2 (\omega_{h,k})} \le 2 k^{n-1} \left\Vert d f_k ( \Lambda ;H_k) \right\Vert^2_{L^2 (\omega_{h})}
\end{equation*}
for all large enough $k$, by noting $\omega_{h,k} = k \omega_h + O(1/k)$ which follows from the Bergman kernel expansion (\ref{eqbgexp}); we note that we used the natural dual metric  of $\omega_{h,k}$ on $T^*X$, which contributes to the factor $k^{-1}$ above. Similarly, we get
\begin{equation*}
	\left\Vert \bar{\partial} ( \pi_T \xi_{\Lambda} ) \right\Vert^2_{L^2 (\omega_{h,k})}  \le 2 k^{n-1} \left\Vert \nabla^{0,1} d f_k ( \Lambda ;H_k) \right\Vert^2_{L^2 (\omega_{h})},
\end{equation*}
where we wrote $\nabla$ is the covariant derivative of $\omega_h$ and $\nabla^{0,1} = \bar{\partial}$ is its $(0,1)$-part. Thus the estimates above give
\begin{equation*}
	\Vert \Lambda \Vert^2_{\mathrm{HS}(H_k)} \le 2 C_3 k^n \left( \Vert \nabla f_k ( \Lambda ;H_k) \Vert^2_{L^2 (\omega_h)} + \Vert \nabla \nabla f_k ( \Lambda ;H_k) \Vert^2_{L^2 (\omega_h)} \right),
\end{equation*}
when $\mathrm{tr} (\Lambda ) = 0$.

In general, we write $\Lambda = \Lambda_0 + cI$ where $c := \mathrm{tr} (\Lambda) / N_k$ and $\Lambda_0$ is the trace-free part of $\Lambda$. By diagonalising $\Lambda_0$ and recalling the definition of the Hilbert--Schmidt norm, we find
\begin{equation*}
	\Vert \Lambda \Vert^2_{\mathrm{HS}(H_k)} = \Vert \Lambda_0\Vert^2_{\mathrm{HS}(H_k)} + c^2 N_k .
\end{equation*}
We likewise decompose
\begin{equation*}
	f_k (\Lambda ;H_k) = f_k (\Lambda_0 ;H_k) + c \sum_{i=1}^{N_k} |s_i|^2_{\mathrm{FS}(H_k)} = f_k (\Lambda_0 ;H_k) + c
\end{equation*}
by recalling the definition (\ref{eqdfs}) of $\mathrm{FS}(H_k)$. Writing $\bar{\rho}_k (\omega_h ) = \frac{V}{N_k} \rho_k (\omega_h)$ and recalling $\mathrm{FS}_k (H_k) = \bar{\rho}_k (\omega_h)^{-1}h^k$ as in \cite{rawnsley}, the above equation gives
\begin{equation*}
	\bar{\rho}_k (\omega_h) f_k (\Lambda ;H_k) = \sum_{i=1}^{N_k} \lambda_i |s_i|^2_{h^k} +  c \bar{\rho}_k (\omega_h).
\end{equation*}
Integrating this over $X$, noting that $\{ s_i \}_{i=1}^{N_k}$ is a $\mathrm{Hilb}(h^k)$-orthonormal basis and that we have $\int_X \bar{\rho}_k (\omega_h) \omega^n_h /n! = V$, we get
\begin{equation*}
	cV = \int_X \bar{\rho}_k (\omega_h) f_k (\Lambda ;H_k) \frac{\omega_h^n}{n!} 
\end{equation*}
since $\mathrm{tr} (\Lambda_0)=0$, and hence
\begin{equation*}
	|c| = \frac{1}{V} \left| \int_X \bar{\rho}_k (\omega_h) f_k (\Lambda ;H_k) \frac{\omega_h^n}{n!} \right| \le \frac{1}{V} \Vert \bar{\rho}_k (\omega_h) \Vert_{L^2 (\omega_h)} \Vert f_k (\Lambda ;H_k) \Vert_{L^2 (\omega_h)} \le 2 \Vert f_k (\Lambda ;H_k) \Vert_{L^2 (\omega_h)},
\end{equation*}
by Cauchy--Schwarz and the Bergman kernel expansion (\ref{eqbgexp}).

Collecting all the estimates and setting $C_4:= 2 \max \{ C_3, 2 V \} >0$, which depends only on $h$ and not on $k$, we get
\begin{align*}
	\Vert \Lambda \Vert^2_{\mathrm{HS}(H_k)} &\le 2N_k \Vert f_k (\Lambda ;H_k) \Vert^2_{L^2(\omega_h)} + 2C_3 k^n \left( \Vert \nabla f_k ( \Lambda ;H_k) \Vert^2_{L^2 (\omega_{h})} + \Vert \nabla \nabla f_k ( \Lambda ;H_k) \Vert^2_{L^2 (\omega_{h})} \right) \\
	&\le C_4 k^{n} \Vert f_k (\Lambda ;H_k) \Vert^2_{W^{2,2}(\omega_h)}
\end{align*}
for all large enough $k$, since we have $N_k = Vk^n + O(k^{n-1})$ by the asymptotic Riemann--Roch theorem.
\end{proof}

\begin{remark}
	A heuristic interpretation of Theorem \ref{ppqifs} is as follows. The image of the Kodaira embedding $\iota_k : X \inj \prj (H^0(X , L)^{\vee} )$ is not contained in any proper linear subspace, and hence we expect that an appropriately defined norm of $\pi_T \xi_{\Lambda}$ should be equivalent to $\Vert \Lambda \Vert_{\mathrm{HS}(H_k)}$. The estimate depends on how ``degenerate'' $\iota_k (X)$ is in the projective space, i.e.~how close it is to being contained in a proper linear subspace. The argument above proves this intuition when we take the reference metric to be $\mathrm{FS}_k (H_k)$, for which the centre of mass of the embedding is in fact close to the identity matrix by the Bergman kernel expansion (\ref{eqbgexp}). Since $\iota_k$ is an algebraic morphism, we can also expect that the constant depends at most polynomially in $k$. This argument seems intuitively plausible, but no written proof seems to be available in the literature. 
\end{remark}

\section{Correction to \cite{yhextremal}}

There are several places in \cite{yhextremal} which requires corrections, since they depend on the results in \cite{yhhilb} that are reproduced in \cite[Lemmas 5 and 6]{yhextremal}. \cite[Lemma 5]{yhextremal} is not directly used in the rest of the argument in \cite{yhextremal}, and only affects the paper through \cite[Lemma 6]{yhextremal}. The full quantitative statement of \cite[Lemma 6]{yhextremal} is used only in \cite[page 3004 in \S 4.2, after (35)]{yhextremal}, and all the other places \cite[pages 2982 and 3002]{yhextremal} that need \cite[Lemma 6]{yhextremal} only need the injectivity of the Fubini--Study map which was also proved by Lempert \cite{Lem21}.

Thus it suffices to correct the argument in \cite[page 3004]{yhextremal}, which deals with the equation
\begin{equation} \label{eqextc}
	\sum_{i=1}^{N_k} \left( d_i - \left( 1 - \frac{F_{m,k}}{4 \pi k^{m+2}} \right) \right) |s_i|^2_{h^k_{(m)}} =0,
\end{equation}
where $d_1 , \dots , d_{N_k} \in \rl$, $m \in \nnb$, $h^k_{(m)} = \mathrm{FS}_k (H_{k,m})$, $H_{k,m} \in \CB_k$, is a hermitian metric on $L^k$ as constructed in \cite[Proposition 1 and Proof of Corollary 1]{yhextremal}, and $\{ s_i \}_{i=1}^{N_k}$ is an $H_{k,m}$-orthonormal basis. We need to conclude that there exists $C_5 >0$ such that
\begin{equation*}
	|d_i -1 | \le C_5 k^{c(n)-m}
\end{equation*}
for all $i=1 , \dots , N_k$, and for all large enough $k$, where $c(n)$ is some constant which depends only on $n$.

We apply the argument above. We first define two hermitian matrices $A:= \mathrm{diag} (d^{-1}_1 , \dots , d^{-1}_{N_k})$ and $B:=I$ with respect to the $H_{k,m}$-orthonormal basis $\{ s_i \}_{i=1}^{N_k}$. Note that they take a different form when they are represented with respect to an $H_k$-orthonormal basis, where $H_k = \mathrm{Hilb}(h^k)$ and $\omega_h$ is the extremal metric that is used as a reference metric in \cite{yhextremal}. We find
\begin{equation*}
	f_k (A,B;H_{k,m}) = \sum_{i=1}^{N_k} \left( d_i -  1 \right) |s_i|^2_{h^k_{(m)}},
\end{equation*}
and hence (\ref{eqextc}) can be re-written as
\begin{equation*}
	f_k (A,B; H_{k,m}) = - \frac{F_{m,k}}{4 \pi k^{m+2}}.
\end{equation*}
Taking $H_k = \mathrm{Hilb}(h^k)$ as above, we observe
\begin{equation*}
	f_k (A,B;H_k) = \frac{\mathrm{FS}_k (H_k)}{\mathrm{FS}_k (H_{k,m})} \left( \frac{\mathrm{FS}_k (H_{k,m})}{\mathrm{FS}_k (A)}  - \frac{\mathrm{FS}_k(H_{k,m})}{\mathrm{FS}_k (B)} \right) ,
\end{equation*}
which implies that
\begin{equation} \label{eqdfrfmc}
	f_k (A,B;H_k) = \frac{\mathrm{FS}_k (H_k)}{\mathrm{FS}_k (H_{k,m})} f_k (A,B;H_{k,m}) = - \frac{\mathrm{FS}_k (H_k)}{\mathrm{FS}_k (H_{k,m})} \frac{F_{m,k}}{4 \pi k^{m+2}}.
\end{equation}

We recall $\mathrm{FS}_k (H_k) = \bar{\rho}_k (\omega_h)^{-1}h^k$ \cite{rawnsley}, and
\begin{equation*}
	\mathrm{FS}_k (H_{k,m}) = e^{k \phi_{(m)}} h^k
\end{equation*}
in the terminology of \cite[Proposition 1]{yhextremal}; the only important point here is that $k \phi_{(m)}$ converges in $C^{\infty}$ as $k \to \infty$, and hence its derivatives are all uniformly bounded for all large enough $k$. Together with the expansion (\ref{eqbgexp}) of the Bergman function, we thus find that all derivatives of $\mathrm{FS}_k (H_k) / \mathrm{FS}_k (H_{k,m})$ are bounded uniformly for all large enough $k$, and hence there exists $C_6>0$ such that
\begin{equation*}
	\left\Vert \frac{\mathrm{FS}_k (H_k)}{\mathrm{FS}_k (H_{k,m})} \frac{F_{m,k}}{4 \pi k^{m+2}} \right\Vert_{W^{2,2} (\omega_h)} \le C_6k^{-m-2} \Vert F_{m,k} \Vert_{W^{2,2} (\omega_h)}
\end{equation*}
holds for all large enough $k$. As pointed out in \cite[page 2995, before the equation (20)]{yhextremal}, $\Vert F_{m,k} \Vert_{W^{2,2} (\omega_h)}$ is uniformly bounded for all large enough $k$. Thus there exists a constant $C_7 >0$ such that
\begin{equation} \label{eqesfkmm}
	\Vert f_k (A,B;H_k) \Vert_{W^{2,2} (\omega_h)} \le C_7 k^{-m-2}.
\end{equation}

We now recall that $A^{-1} = \mathrm{diag} (d_1 , \dots , d_{N_k})$ and $B^{-1}=I$ with respect to the $H_{k,m}$-orthonormal basis $\{ s_i \}_{i=1}^{N_k}$. When we represent $H_k$ as a matrix with respect to an $H_{k,m}$-orthonormal basis, we find that each entry of $H_{k}$ can be bounded uniformly for all large enough $k$ by the construction of $H_{k,m}$ as given in \cite[Proof of Corollary 1]{yhextremal} (see also \cite[Appendix]{donnum}), since it is defined as a $\mathrm{Hilb}(\tilde{h}^k)$ for some perturbation $\tilde{h}$ of $h$ such that $\log (\tilde{h}/h) = O(1/k)$. Thus there exists a constant $C_8 >0$ such that
\begin{equation*}
	\Vert A^{-1} - B^{-1} \Vert_{\mathrm{HS}(H_{m,k})} \le C_8 N_k \Vert A^{-1} - B^{-1} \Vert_{\mathrm{HS}(H_{k})},
\end{equation*}
holds for all large enough $k$. By recalling the asymptotic Riemann--Roch theorem $N_k = Vk^n + O(k^{n-1})$, we find that there exists a constant $C_9 >0$ such that
\begin{align*}
	|d_i-1| &\le \Vert A^{-1} - B^{-1} \Vert_{\mathrm{HS}(H_{m,k})} \\
	&\le C_9 k^n \Vert A^{-1} - B^{-1} \Vert_{\mathrm{HS}(H_{k})} \\
	&\le C_9 C_h k^{2n} \Vert f_k (A,B;H_k) \Vert_{W^{2,2} (\omega_h)} \\
	&\le C_7 C_9 C_h k^{2n-m-2}
\end{align*}
for all $i=1 , \dots , N_k$ and for all large enough $k$, by Theorem \ref{ppqifs} and (\ref{eqesfkmm}), as required.

\bibliography{FS.bib}

\end{document}